\newtheorem{theorem}{Theorem}
\newtheorem{remark}[theorem]{Remark}
\newtheorem{proposition}[theorem]{Proposition}
\newenvironment{proof}[1][Proof]{\noindent\textbf{#1.} }{\ \rule{0.5em}{0.5em}}
\begin{document}
\title{Area preserving maps and volume preserving maps between a class of polyhedrons and a sphere}
\author{Adrian Holho\c s}\ead{Adrian.Holhos@math.utcluj.ro}\address{Technical University of Cluj-Napoca, Department of Mathematics, str. Memorandumului 28, RO-400114 Cluj-Napoca, Romania}
\author{Daniela Ro\c sca}\ead{Daniela.Rosca@math.utcluj.ro}\address{Technical University of Cluj-Napoca, Department of Mathematics, str. Memorandumului 28, RO-400114 Cluj-Napoca, Romania}                                           

\begin{abstract}For a class of polyhedrons denoted $\mathbb K_n(r,\varepsilon)$, we construct a bijective continuous area preserving map from $\mathbb K_n(r,\varepsilon)$ to the sphere $\mathbb S^{2}(r)$, together with its inverse. Then we investigate for which polyhedrons $\mathbb K_n(r',\varepsilon)$ the area preserving map can be used for constructing a bijective continuous volume preserving map from $\overline {\mathbb K}_n(r',\varepsilon)$ to the ball  $\overline {\mathbb S^{2}}(r)$. These maps can be further used in constructing uniform and refinable grids on the sphere and on the ball, starting from uniform and refinable grids of the polyhedrons $\mathbb K_n(r,\varepsilon)$ and $\overline {\mathbb K}_{n}(r',\varepsilon)$, respectively. In particular, we show that HEALPix grids can be obtained by mappings polyhedrons $\mathbb K_n(r,\varepsilon)$ onto the sphere.
\end{abstract}

\maketitle

\section{Introduction}

A uniform grid on a two(three)-dimensional domain $D$ is a grid all of whose cells have the same area(volume). This is required in statistical applications and in construction of wavelet bases of the space $L^2(D)$, when one wishes to use the standard inner product and 2-norm, instead of a weighted norm dependent on the grid \cite{RoRM}. A refinement process is needed for multiresolution analysis or for multigrid methods, when a grid is not fine enough to solve a problem accurately. A refinement of a grid is called uniform when each cell is divided into a given number of smaller cells having the same measure. To be efficient in practice, a refinement procedure should also be a simple one.  In many applications, especially  in geosciences, one requires simple, uniform and refinable (hierarchical) grids on the 2D-sphere or on the 3D-ball (i.e. the solid sphere). One simple method to construct grids on the 2D-sphere is to transfer existing planar grids, while for the 3D-ball a simple way would be to transfer polyhedral grids.

Partitions of the 2D-sphere ${\mathbb S^{2}}$  into regions of small diameter and equal area has already been constructed by Alexander \cite{A72}. In \cite{Leo} Leopardi derives a recursive  zonal  equal area partitioning algorithm  for the unit sphere $\mathbb S^{d}$ embedded in $\mathbb R^{d+1}$. The partition for the particular case $d=2$ consists of polar cups and rectilinear regions  that are arranged in zonal collars. Besides the fact that that the regions have different shapes, his partition is not suitable  for applications  where one must avoid  that vertices of spherical  rectangles  lie on edges of neighbor rectangles.

In astronomy, the most used construction of equal area partitions of $\mathbb S^{2}$, is the HEALPix  grid \cite{heal}, providing a hierarchical equal area iso-latitude pixelization. Other constructions are the truncated icosahedron-method of Snyder \cite{Sny}, the small circle subdivision method introduced in \cite{SKS}, the icosahedron-based method by Tegmark \cite{T96}, see also \cite{T06}. Section 1 in \cite{RP} contains a larger list of uniform spherical grids, together with their properties. A complete description of all known spherical projections from a sphere or parts of a sphere to the plane, used in cartography, is realized in \cite{Gra,Syd}. We should mention that most of the existing constructions of spherical \textit{hierarchical} (i.e. \textit{refinable}) grids do not provide an equal area partition. However, in \cite{Ro,RP,RP2} we have already constructed some area preserving maps onto the sphere, using some Lambert azimuthal equal area projections. By transporting onto the sphere uniform and refinable planar grids, we could obtain uniform and refinable spherical grids. Also, in \cite{HR} we have constructed an octahedral equal area partition of the sphere without make use of Lambert projection, and we have studied some properties of the corresponding configurations of points.
Regarding the equal volume partition of the 3D-ball, to our knowledge there was no uniform and refinable grid. This was the motivation for the work in \cite{RMG}, where we have constructed a volume preserving map from the 3D-cube to the 3D-ball. This allowed us not only the construction of uniform and refinable grids on the 3D-ball, but also a uniform sampling in the space of 3D rotations with, with applications in texture analysis.

\medskip
In this paper we consider a class of convex polyhedrons $\mathbb K_{n}(r,\varepsilon)$ and first we construct an area preserving map from $\mathbb K_{n}(r,\varepsilon)$ to the 2D-sphere of radius $r$. Then, using this map, we construct a volume preserving projection from the interior of the polyhedrons to the 3D-ball of radius $r$. The spherical grids in \cite{heal} and \cite{HR} can be obtained by mapping grids on  $\mathbb K_{n}(r,\varepsilon)$ to the 2D-sphere.

The paper is structured as follows. In Section \ref{sec:prel} we describe the class of polyhedrons $\mathbb K_n(r,\varepsilon)$ which will be projected onto the sphere and we give some formulas which will be useful in the next sections. In Section \ref{sec:conTn} we construct a continuous area preserving map from $\mathbb K_n(r,\varepsilon)$ to the sphere, together with its inverse. In Section \ref{sec:heal} we show how our maps can be useful for the construction of uniform and refinable grids on the sphere. In particular, we show that the HEALPix grids \cite{heal} can be obtained as continuous images of polyhedrons $\mathbb K_n(r,\varepsilon)$ . Finally, in Section \ref{sec:volpres}, we construct a volume preserving map from the interior of the polyhedrons $\mathbb K_n(r,\varepsilon)$ to the 3D-ball of radius $r$, using the new constructed area preserving map.

\section{Preliminaries}\label{sec:prel}
Let $r>0$ and consider the sphere $\mathbb S^2(r)=\mathbb S^2$ of radius $r$ centered at the origin $O$, of parametric equations
\begin{eqnarray}
&&x=r\cos \theta \sin \varphi,\nonumber\\
&&y=r\sin \theta \sin \varphi,\label{parsf}\\
&&z=r\cos \varphi,\nonumber
\end{eqnarray}
where $\varphi \in [0,\pi]$ is the colatitude and $\theta \in [0,2\pi)$ is the longitude. A simple calculation shows that the area element of the sphere is
\begin{equation}\label{ae}
dS=r^2\sin \varphi\, d\theta\,d\varphi.
\end{equation}

Let $\varepsilon\in[0,1)$ be fixed. We intersect the sphere $\mathbb S^2$ with the planes $z=\pm \varepsilon r$, and we denote with $\mathbb S_{r\varepsilon}^{+}=\mathbb S^+$ the spherical cap situated above the plane $z=\varepsilon r$, with $\mathbb S_{\varepsilon r}^{-}= \mathbb S^-$ the spherical cap situated below the plane $z=-\varepsilon r$, and with $\mathbb E_{\varepsilon r}=\mathbb E$ the equatorial belt between the planes $z=\pm \varepsilon r$. Their areas are
$$
\mathcal A(\mathbb S^{+})=\mathcal A(\mathbb S^{-})=2\pi (1-\varepsilon)r^2,\quad \mathcal A(\mathbb E)=4\pi \varepsilon r^2,
$$
and since $\mathbb S^{+} \cup \mathbb S^{-} \cup \mathbb E =\mathbb S^2 $ and these portions are pairwise disjoint, one has
$$
\mathcal A(\mathbb S^{+})+\mathcal A(\mathbb S^{-})+\mathcal A(\mathbb E)=\mathcal A(\mathbb S^2)=4\pi r^2.
$$
\begin{figure}\centering
\includegraphics*[width=0.4\textwidth]{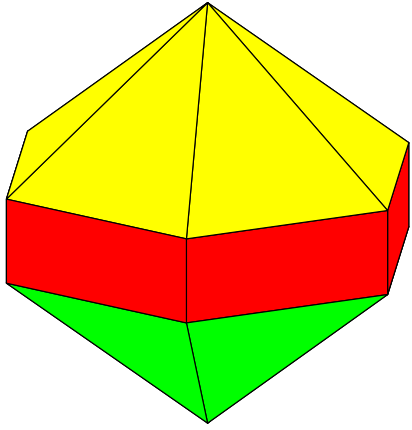}
\caption{The polyhedron $\mathbb K_n(r,\varepsilon)$ which will be projected on the sphere. Here $n=6$.}
\label{fig:idee}
\end{figure}
For a fixed integer $n$, $n\geq 3$, let $\mathbb K_n = \mathbb K_n(r,\varepsilon)$ be a polyhedron formed by a regular prism $\mathbb B_n=\mathbb B_n(r,\varepsilon)$ of height $2\varepsilon r$ and two congruent pyramids $\mathbb P_n^+(r,\varepsilon)=\mathbb P_n^+$ and $\mathbb P_n^-(r,\varepsilon)=\mathbb P_n^-$ of lateral area $2\pi(1-\varepsilon)r^2$, such that the two bases of the prism $\mathbb B_n$ coincide with the bases of the two pyramids (see Figure \ref{fig:idee}). Let $\mathcal P_n$ denotes the regular polygon which is the base of $\mathbb P_n^+$ and the upper base of $\mathbb B_n$. We place $\mathbb K_n$ such that $\mathbb B_n$ is centered at $O$ and symmetric with respect to the plane $OXY$ and such that one of its vertical edge is situated in the plane $OXZ$. Further, we divide the space $\mathbb R^3$ onto $2n$ zones $I_i^\pm$, $i=0,\ldots,n-1$, as follows:
\begin{eqnarray*}
&I_0^+=\left \{(x,y,z)\in \mathbb R^3, z\geq 0, \ 0\leq y\leq {x\tan} \frac {2\pi}n \right\},\\
&I_0^-=\left \{(x,y,z)\in \mathbb R^3, z\leq 0, \ 0\leq y\leq {x\tan} \frac {2\pi}n \right\},
\end{eqnarray*}
and further, the other zones $I_i^\pm $ are obtained by rotating $I_0^\pm$ with the angle $\alpha_i= \frac {2i\pi}n$ as
\begin{eqnarray*}
&I_i^\pm =\{ \mathcal R_i \cdot (x,y,z)^T, \ (x,y,z)\in I_0^\pm \},
\end{eqnarray*}
where $\mathcal R_i$ is the $3D$ rotation matrix around $OZ$,
$$
\mathcal R_i=\left(
               \begin{array}{ccc}
                 \cos \alpha_i & -\sin \alpha_i & 0 \\
                 \sin \alpha_i & \cos \alpha_i & 0 \\
                 0 & 0 & 1 \\
               \end{array}
             \right).
$$
Thus, each face of the pyramids $\mathbb P_n^\pm$ will be situated in one of the domains $I_i^\pm$ (see Figure \ref{fig:sectorI0}).

\begin{figure}\centering
\includegraphics*[width=0.5\textwidth]{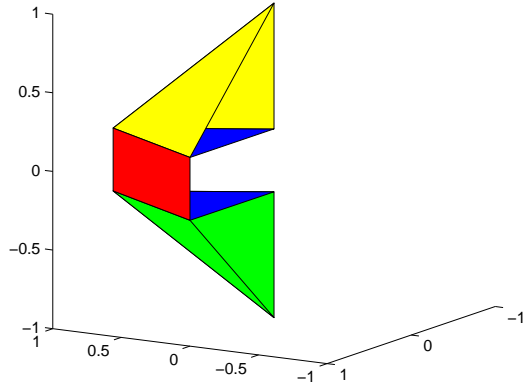}
\caption{The portions $I_0^+$ and $I_0^-$. }\label{fig:sectorI0}
\end{figure}

In Section \ref{sec:conTn} we will construct a map $\mathcal T_{n,r}=\mathcal T_n: \mathbb S^2\to\mathbb K_n $ which preserves areas, in the sense that
\begin{equation}
\mathcal A(D)=\mathcal A (\mathcal T_n(D)), \ \mbox{ for all }D\subseteq \mathbb S^2,
\end{equation}
where $\mathcal A(D)$ denotes the area of a domain $D$.
For an arbitrary point $(x,y,z)\in \mathbb S^2$ we denote
\begin{equation}\label{XYZ}
(X,Y,Z)=\mathcal T_n(x,y,z)\in \mathbb K_n.
\end{equation}

The restrictions of $\mathcal T_n$ to $\mathbb S^+$, $\mathbb S^-$, $\mathbb E$ will be denoted by $\mathcal T_n^+$, $\mathcal T_n^-$ and $\mathcal T_n^e$, respectively. More precisely, we will deduce the formulas for the area preserving maps $\mathcal T_n^+:\mathbb S^+ \to \mathbb P_n^+$, $\mathcal T_n^-:\mathbb S^- \to \mathbb P_n^-$ and $\mathcal T_n^e:\mathbb E^+ \to \mathbb B_n.$

We introduce the following notations:
\begin{eqnarray*}
R_n&=&\mbox{the radius of the circle circumscribed to }\mathcal P_n,\\
r_n&=&\mbox{the radius of the circle inscribed in }\mathcal P_n,\\
b_n&=& \mbox{the altitude of the pyramid } \mathbb P_n^+,\\
\ell_n&=&\mbox{the edge of the polygon }\mathcal P_n,\\
a_n &=&\mbox{the slant height of the pyramid }\mathbb P_n^+.
\end{eqnarray*}
We impose that the lateral area of $\mathbb B_n$ equals the area of the equatorial belt $\mathbb E$, therefore
\begin{equation}\label{ell}
\ell_n=\frac{2\pi r}n.
\end{equation}
On the other hand, since $\ell_n=2R_n \sin \frac \pi n$, we deduce that
\begin{equation}\label{Rn}
R_n=\frac{\pi r}{n\sin \frac \pi n}.
\end{equation}
We also have
\begin{flalign}\label{rn}
r_{n}&=R_{n} \cos \frac{\pi}{n}.
\end{flalign}
The area of a face of the pyramid $\mathbb P_n^+$ is
\begin{equation}\label{psin}
\mathcal A_n=\frac{a_n\cdot \ell_n}{2},
\end{equation}
and by imposing that $\mathcal A(\mathbb P_n^+)=\mathcal A (\mathbb S^+) $, we find that $a_{n}=2(1-\varepsilon)r$ and
\begin{equation}\label{An}
\mathcal A_n=\frac{2\pi(1-\varepsilon)r^2}n=\ell_{n}(1-\varepsilon)r.
\end{equation}
Using the equality $a_{n}^2=r_{n}^2+b_{n}^2$, we obtain
\begin{flalign}\label{In}
b_n=r \left( 4(1-\varepsilon)^2-\frac{\pi^2}{n^2}\cot^2 \frac \pi n\right)^{1/2}.
\end{flalign}
\section{Construction of the area preserving map $\mathcal T_n:\mathbb S^2 \to \mathbb K_n$ and its inverse}\label{sec:conTn}
\subsection{Construction of the map $\mathcal T_n^+$}
For $h\in(1-\varepsilon,1)$, we denote by $\mathbb S_{rh}$ the spherical cap situated above the plane of equation $z=rh$. A simple calculation shows that
\begin{equation}\label{ariash}
\mathcal A(\mathbb S_{rh})=2\pi(1-h)r^2.
\end{equation}
Now, we calculate $H>0$ such that the portion $\mathbb P_n^+(H)$ of the pyramid $\mathbb P_n^+$ situated above the plane $z=H$ has the same area $\mathcal A(\mathbb S_{rh})$. For the small pyramid $\mathbb P_n^+(H)$, let $\widetilde{R}_n$ be the radius of the circumscribed circle of its base. Then, one has
$$
\frac {\widetilde{R}_n}{R_n}=\frac{b_n-H+\varepsilon r}{b_n},
$$
and the area $\mathcal A(H)$ of a face of $\mathbb P_n^+(H)$ is therefore
$$
\mathcal A_n(H)=\mathcal A_n\cdot\left( 1-\frac{H-\varepsilon r}{b_n}\right)^2= \frac{2\pi(1-\varepsilon)r^2}n \left( 1-\frac{H-\varepsilon r}{b_n}\right)^2.
$$
Imposing now $\mathcal A(\mathbb S_{rh})=n \mathcal A(H), $ we obtain that
$$
2\pi (1-h)r^2=2\pi (1-\varepsilon)r^2\left(1-\frac{H-\varepsilon r}{b_n} \right)^2,
$$
whence after some calculations we obtain
\begin{equation}\nonumber
H=\varepsilon r+b_n\left(1-\sqrt{\frac{1-h}{1-\varepsilon}} \right).
\end{equation}
In conclusion, for $z\geq \varepsilon r$ we can define $Z$ from \eqref{XYZ} as
\begin{equation}\label{defZ}
Z=\varepsilon r+b_n\left(1-\sqrt{\frac{1-z/r}{1-\varepsilon}} \right),
\end{equation}
meaning that the parallel circles of the sphere map onto polygons obtained as intersections of $\mathbb P_n^+$ with planes parallel to $XOY$.

For the formulas for $X$ and $Y$ we proceed as follows.
We focus on the portion $\mathcal F_0^+$ of $\mathbb S^2$, situated in $I_0^+$, and the face of the pyramid $\mathbb P_n^+$ situated in $I_0^+$ will be denoted with $F_0^+.$ We denote $A(\sqrt{1-\varepsilon^2}r,0,\varepsilon r),$ $C(0,0,r)$ and we consider the vertical plane of equation $y=x\tan \alpha,$ with $\alpha \in (0,2\pi/n)$ (see Figure \ref{fig:expl}, left). We denote by $\widetilde{MC}$ its intersection with $\mathcal F_0^+$. More precisely, $M(\sqrt{1-\varepsilon^2}\, r \cos \alpha,\sqrt{1-\varepsilon^2}\,r \sin \alpha,\varepsilon r).$ The area of the spherical domain $AMC$ equals $\alpha(1-\varepsilon)r^2.$ Now we intersect the face $F_0^+$ of the pyramid with the vertical plane of equation $y=x\tan \beta$ and we denote by $M'(X',Y')$ its intersection with the edge $A'B'$, where $A'(R_n,0,\varepsilon r)$ and $B'(R_n\cos 2\pi/n, R_n \sin 2\pi/n, \varepsilon r)$ (see Figure \ref{fig:expl}, right).
\begin{figure}
\includegraphics*[width=0.45\textwidth]{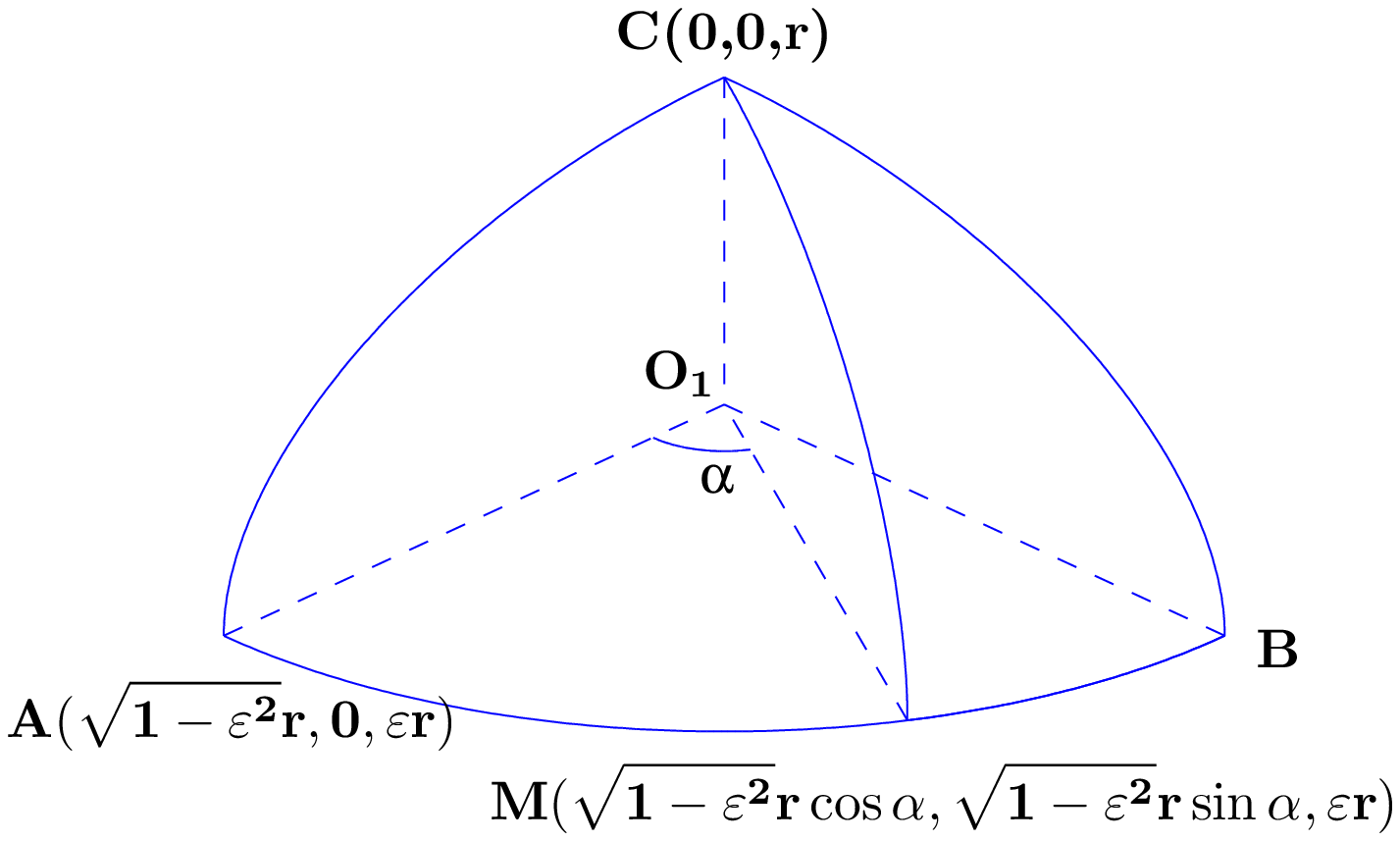}
\includegraphics*[width=0.45\textwidth]{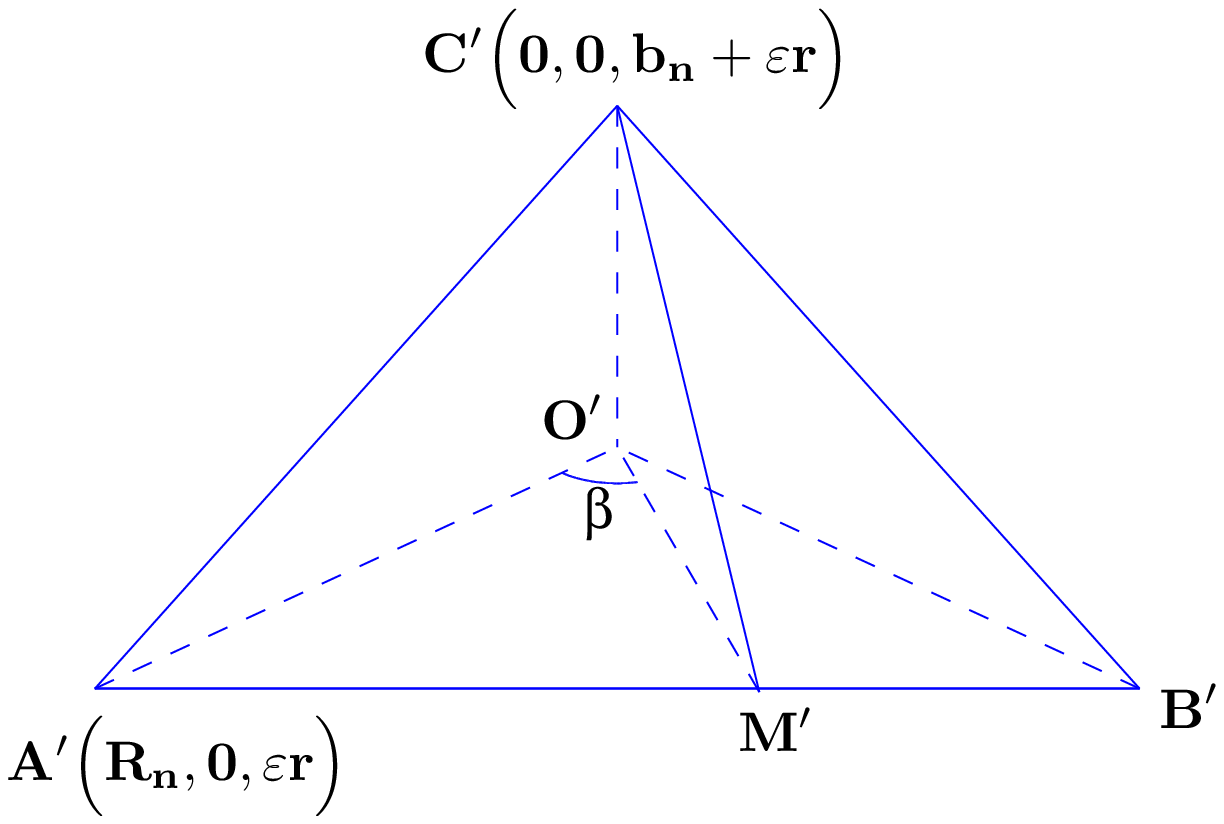}
\caption{The portion $AMC$ of the spherical cap $\mathbb S^+$ and its image triangle $A'M'C'=\mathcal T_n(AMC)$ on the pyramid $\mathbb P_n^+$.}\label{fig:expl}
\end{figure}

For finding the coordinates $X',Y'$ we use the fact that $M'$ is the intersection of the line $A'B'$ with the plane of equation $Y'=X'\tan \beta$, so solving the system
$$
\frac{X'-R_n}{R_n \left(\cos \frac {2\pi}n-1 \right)}=\frac{Y'-0}{R_n \sin \frac {2\pi}n},\qquad Y'=X'\tan \beta,
$$
we obtain
\begin{equation}
X'=\frac {R_n}{1+\tan \beta \tan \frac \pi n},\qquad Y'=\frac {R_n \tan \beta}{1+\tan \beta \tan \frac \pi n}.
\end{equation}
Further,
$$
A'M'=\frac{Y'}{\cos \frac \pi n},
$$
and therefore, for the area of the planar triangle $A'C'M'$ we find
$$
\mathcal A(A'C'M')=\frac{A'M'\cdot a_{n}}{2}=\frac{2\pi(1-\varepsilon)r^2\tan \beta}{n\sin\frac{2\pi}{n}\big(1+\tan \beta \tan \frac \pi n \big)}.
$$
If we impose that this area equals the area of the spherical domain $AMC$, which is $\alpha(1-\varepsilon)r^2$, we finally find that
\begin{equation}\label{beta}
\tan \beta =\frac {\alpha n \sin \frac {2\pi}n}{2\big(\pi-\alpha n\sin^2 \frac \pi n \big)}.
\end{equation}
This is in fact the relation between $\alpha$ and $\beta$, such that the slice $\widetilde{ACM}$ of $\mathcal F_0^+$ has the same area with the triangular slice $A'C'M'$ of the face $F_0^+.$

Consider now $V=(m,q,p)\in \mathcal F_0^+$ and denote $N=(u,s,t):=\mathcal T_n(V).$ The point $V$ is the intersection of $\mathcal F_0^+$ with the planes
$$
(P_1):z=p\ \mbox{ and }\ (P_2):y=\frac qm x,\ \mbox{ where }\frac qm=\tan \alpha.
$$
The point $N$ will be the intersection of $\mathcal T_n(P_1)$, $\mathcal T_n(P_2)$ and the face $F_0^+$, therefore
\begin{eqnarray*}
\mathcal T_n(P_1)&:& t=\varepsilon r+b_n\left(1-\sqrt{\frac{1-p/r}{1-\varepsilon}} \right),\\
\mathcal T_n(P_2)&:& s= \frac{n \sin \frac{2\pi}{n}\cdot \arctan \frac qm}{2\left(\pi- n\arctan \frac qm \sin^2 \frac \pi n\right)}\cdot u,\\F_0^+&:& u\cdot b_n+s\cdot b_n \tan \frac {\pi}n+t\cdot R_n =(b_n+\varepsilon r)R_n.
\end{eqnarray*}
Solving this system we find
\begin{eqnarray*}
u&=&R_{n}\sqrt {\frac{1-p/r}{1-\varepsilon}}\left(1-\frac n\pi \sin^2\frac \pi n \arctan \frac qm \right),\\
s&=&R_{n}\sqrt {\frac{1-p/r}{1-\varepsilon}}\,\frac{n\sin \frac {2\pi}{n}\arctan \frac qm}{2\pi},\\
t&=&\varepsilon r+b_n\left( 1-\sqrt \frac{1-p/r}{1-\varepsilon} \right).
\end{eqnarray*}
In conclusion, using \eqref{Rn} and with the notations in \eqref{XYZ}, for $(x,y,z)\in \mathcal F_0^+$, we define the restriction of $\mathcal T_n^+$ to $\mathcal F_0^+$ as
\begin{eqnarray}
X&=&\sqrt {\frac{r(r-z)}{1-\varepsilon}}\left(\frac{\pi}{n\sin \frac \pi n}- \sin\frac \pi n \arctan \frac yx \right),\label{fX}\\
Y&=&\sqrt {\frac{r(r-z)}{1-\varepsilon}}\,\cos \frac {\pi}{n}\arctan \frac yx,\label{fY}\\
Z&=&\varepsilon r+b_n\left( 1-\sqrt \frac{1-z/r}{1-\varepsilon} \right),\label{fZ}
\end{eqnarray}
with $b_n$ given in \eqref{In}.

In general, for $i=0,\ldots,n-1$, let $\mathcal F_i^\pm$ be the portion of $\mathbb S^2$ situated in $I_i^\pm$. The the restriction of $\mathcal T_n^+$ to $\mathcal F_i^+$ will be calculated as
\begin{equation}\label{frot}
(X,Y,Z)^T=\mathcal R_i\cdot \mathcal T_n^+\left( \mathcal R_i^{T}\cdot (x,y,z)^T\right),\quad \mbox{for } (x,y,z)\in \mathcal F_i^+,\ \mbox{ i.e.}
\end{equation}
\begin{eqnarray}
X&=&\sqrt {\frac{r(r-z)}{1-\varepsilon}}\left(\frac{\pi \cos \alpha_i}{n\sin \frac{\pi}{n}}- \sin(\alpha_i+ \frac{\pi}{n}) \cdot\arctan \frac{-x\sin \alpha_i+y\cos \alpha_i}{x\cos\alpha_i+y\sin\alpha_i} \right),\label{fiplusX}\\
Y&=&\sqrt {\frac{r(r-z)}{1-\varepsilon}}\left(\frac{\pi \sin \alpha_i}{n\sin \frac{ \pi} n}+ \cos(\alpha_i+ \frac{\pi}{n}) \cdot\arctan \frac{-x\sin \alpha_i+y\cos \alpha_i}{x\cos\alpha_i+y\sin\alpha_i} \right),\label{fiplusY}\\
Z&=&\varepsilon r+b_n\left( 1-\sqrt \frac{1-z/r}{1-\varepsilon} \right).\label{fiplusZ}
\end{eqnarray}

In spherical coordinates $(\varphi,\theta),$ the map $\mathcal T_n^+$ restricted to $\mathcal F_0^+$ writes as
\begin{eqnarray*}
X&=&r\sqrt{\frac{2}{1-\varepsilon}} \left(\frac{\pi}{n \sin \frac \pi n} -\theta \sin \frac \pi n\right) \sin \frac \varphi 2,\\
Y&=&r\sqrt{\frac{2}{1-\varepsilon}}\, \theta \cos \frac \pi n \sin \frac \varphi 2,\\
Z&=&\varepsilon r+b_n\left( 1-\sqrt \frac{2}{1-\varepsilon} \sin \frac \varphi 2 \right).
\end{eqnarray*}

For the portion $\mathcal F_i^+$ the map is defined by
\begin{eqnarray*}
X&=&r\sqrt{\frac{2}{1-\varepsilon}} \left(\frac{\pi\cos \alpha_i}{n \sin \frac \pi n} -\sin \frac{\pi(2i+1)}{n}\cdot \left( \theta- \alpha_i\right)\right) \sin \frac \varphi 2,\\
Y&=&r\sqrt{\frac{2}{1-\varepsilon}} \left(\frac{\pi\sin\alpha_i}{n \sin \frac \pi n} +\cos \frac{\pi(2i+1)}{n}\cdot \left( \theta- \alpha_i\right)\right) \sin \frac \varphi 2,\\
Z&=&\varepsilon r +b_n\left( 1-\sqrt \frac{2}{1-\varepsilon} \sin \frac \varphi 2 \right).
\end{eqnarray*}

If we evaluate the coefficients of the first fundamental order of this surface
\begin{eqnarray*}
&&E'=(X'_{\varphi})^2+(Y'_{\varphi})^2+(Z'_{\varphi})^2=\frac{r^2}{2(1-\varepsilon)}\; \left[4(1-\varepsilon)^2 +\left(\theta- \frac{(2i+1)\pi}{n} \right)^2 \right]\cos ^{2}\frac{\varphi}{2},\\
&&F'=X'_{\varphi}X'_{\theta}+Y'_{\varphi}Y'_{\theta}+Z'_{\varphi}Z'_{\theta}=\frac{r^2}{2(1-\varepsilon)}\; \left( \theta - \frac{(2i+1)\pi}{n}\right) \sin\varphi,\\
&&G'=(X'_{\theta})^2+(Y'_{\theta})^2+(Z'_{\theta})^2=\frac{2r^2}{1-\varepsilon}\; \sin^2 \frac{\varphi}{2},
\end{eqnarray*}
then we find that $E'G'-(F')^2=r^4\sin^2 \varphi$, which is equal to $EG-F^2$ for the sphere (see \eqref{ae}). This property holds for all the portions $\mathcal F_i^\pm$, in conclusion, $\sqrt{EG-F^2}$ is invariant under $\mathcal T_n^+$, whence $\mathcal T_n^+$ is an area preserving map.
\begin{remark}The map $\mathcal T_n^-:\mathbb S^- \to \mathbb P_n^-$ can be obtained by symmetry. More precisely, the formulas are
\begin{eqnarray}
X&=&\sqrt {\frac{r(r+z)}{1-\varepsilon}}\left(\frac{\pi}{n\sin \frac \pi n}- \sin\frac \pi n \arctan \frac yx \right),\label{fX-}\\
Y&=&\sqrt {\frac{r(r+z)}{1-\varepsilon}}\,\cos \frac {\pi}{n}\arctan \frac yx,\label{fY-}\\
Z&=&\varepsilon r+b_n\left( 1-\sqrt \frac{1+z/r}{1-\varepsilon} \right),\label{fZ-}
\end{eqnarray}
\end{remark}
\subsection{The inverse of the map $\mathcal T_n^+$}

Let $(X,Y,Z)\in \mathbb P_n^+$. First, from \eqref{fiplusZ} we immediately deduce that
\begin{equation}\label{fz}
z=r-r(1-\varepsilon)\left(1-\frac{Z-\varepsilon r}{b_n}\right)^2.
\end{equation}

Then, from \eqref{fiplusX} and \eqref{fiplusY} we find that
$$
\frac YX=\frac{\frac{\pi \sin \alpha_i}{n\sin \frac{\pi}{n}}+\cos (\alpha_i+ \frac{\pi}{n})\cdot\arctan \frac{-x\sin \alpha_i+y\cos \alpha_i}{x\cos\alpha_i+y\sin\alpha_i}}{ \frac{\pi \cos \alpha_i}{n\sin \frac{\pi}{n}}-\sin (\alpha_i+ \frac{\pi}{n})\cdot\arctan \frac{-x\sin \alpha_i+y\cos \alpha_i}{x\cos\alpha_i+y\sin\alpha_i}},
$$
whence
$$
\arctan \frac{-x\sin \alpha_i+y\cos \alpha_i}{x\cos\alpha_i+y\sin\alpha_i}=\frac{ \pi}{n\sin \frac \pi n}\cdot \frac{-X \sin\alpha_i+Y\cos\alpha_i}{X\cos (\alpha_i+ \frac{\pi}{n})+Y\sin(\alpha_i+ \frac{\pi}{n})}=:\lambda.
$$
Therefore, $$y=x \,\frac{\tan\lambda \cos\alpha_i+\sin\alpha_i}{\cos\alpha_i-\tan\lambda \sin\alpha_i},$$ and then, taking into account that $x^2+y^2+z^2=r^2,$ after simple calculations we find
\begin{eqnarray}
x&=&\sqrt{r^2-z^2}\cdot \cos \left(\frac{ \pi}{n\sin \frac \pi n}\cdot \frac{-X \sin\alpha_i+Y\cos\alpha_i}{X\cos (\alpha_i+ \frac{\pi}{n})+Y\sin(\alpha_i+ \frac{\pi}{n})}+ \alpha_i \right),\label{fx}\\
y&=&\sqrt{r^2-z^2}\cdot \sin \left(\frac{ \pi}{n\sin \frac \pi n}\cdot \frac{-X \sin\alpha_i+Y\cos\alpha_i}{X\cos (\alpha_i+ \frac{\pi}{n})+Y\sin(\alpha_i+ \frac{\pi}{n})}+ \alpha_i \right).\label{fy}
\end{eqnarray}
where $z$ is given in \eqref{fz}.
\begin{remark}For the inverse of $\mathcal T_n^-$, the formulas for $x$ and $y$ are as in \eqref{fx} and \eqref{fy}, respectively, while for $z$ formula \eqref{fz} changes into
\begin{equation}\label{fzminus}
z=r(1-\varepsilon)\left(1-\frac{Z-\varepsilon}{b_n}\right)^2-r.
\end{equation}
\end{remark}
\begin{remark} Formulas \eqref{fx} and \eqref{fy} can also be obtained from the formulas for $i=0$, similarly to \eqref{frot}.
\end{remark}

\subsection{Construction of the maps $(\mathcal T_n^e)^{-1}$ and $\mathcal T_n^e$}

The prism $\mathbb B_n$ is regular with height $2\varepsilon r$ and edge $\ell_n$ given in \eqref{ell}.
The face $B_0$ of $\mathbb B_n$ situated in the zone $I_0=I_0^+\cup I_0^-$ has the equation
\begin{equation}\label{ecfpr}
Y \cdot \sin \frac \pi n + X\cdot\cos \frac \pi n=R_n \cos \frac \pi n,
\end{equation}
or, equivalently, $Y=(R_n-X) \cot \frac \pi n$. Then, in order to project the face $B_0$ onto the plane $OYZ$, we make a translation with $R_n$ along the axis $OX$, followed by a rotation of angle $-\frac \pi n$ around the axis $OZ$. Thus, the point $$(X,(R_n-X)\cot \frac \pi n,Z) \in \mathbb B_n$$ will be first translated to $$(X-R_n,(R_n-X)\cot \frac \pi n,Z) \in \mathbb B_n,$$
and then rotated around $OZ$ it will be mapped onto the point
 $$
P\left(0, \frac{R_n-X}{\sin \frac \pi n},Z \right)=P\left(0, \frac{Y}{\cos \frac \pi n},Z \right)
 $$
As a translation followed by rotation, this map will be area preserving. Next we use the inverse Lambert cylindrical equal area projection (see i.e. \cite{Gra}), the point $P$ being further mapped onto

\begin{equation}\label{mape}
 \left(\sqrt {r^2-Z^2}\,\cos \frac {Y}{r\cos \frac \pi n}, \sqrt {r^2-Z^2}\,\sin \frac {Y}{r\cos \frac \pi n},Z\right)\in \mathbb E \subset \mathbb S^2.
\end{equation}

 In conclusion, the map $(\mathcal T_n^e)^{-1}:\mathbb B_n \to \mathbb E$ is area preserving and maps the point $(X,Y,Z)\in \mathbb B_n$ onto the point given in \eqref{mape}. \par
In general, similarly to \eqref{frot}, the face of the prism $\mathbb B_n$ situated in  $I_i=I_i^+\cup I_i^-$ is mapped on the sphere by


\begin{flalign}
x&=\sqrt{r^2-Z^2}\; \cos\left(\frac{Y\;\cos \alpha_i-X\;\sin\alpha_i}{r\cos\frac{\pi}{n}}+\alpha_i \right),\label{xeq}\\
y&=\sqrt{r^2-Z^2}\; \sin\left(\frac{Y\;\cos \alpha_i-X\;\sin\alpha_i}{r\cos\frac{\pi}{n}}+\alpha_i \right),\label{yeq}\\
z&=Z.\label{zeq}
\end{flalign}

For the direct application $\mathcal T_n^e:\mathbb E \to \mathbb B_n$, the calculations give

\begin{eqnarray}
X&=&R_{n}\cos \alpha_i- r \sin (\alpha_i+\frac{\pi}{n}) \arctan \frac{-x\sin \alpha_i+y\cos \alpha_i}{x \cos \alpha_i+y \sin \alpha_i},\label{fe1}\\
Y&=&R_{n}\sin \alpha_i+r \cos (\alpha_i+\frac{\pi}{n}) \arctan \frac{-x\sin \alpha_i+y\cos \alpha_i}{x \cos \alpha_i+y \sin \alpha_i},\label{fe2}\\
Z&=&z.\label{fe3}
\end{eqnarray}
 \subsection{The continuity of $\mathcal T_n$}
 \begin{proposition}The map $\mathcal T_n:\mathbb S^2 \to \mathbb K_n$ is continuous.
 \end{proposition}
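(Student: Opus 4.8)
The plan is to verify continuity by treating $\mathcal T_n$ piece by piece and then checking that the pieces agree on the overlaps of their domains. Since $\mathbb S^2 = \mathbb S^+ \cup \mathbb E \cup \mathbb S^-$ and each of these is further subdivided into the $2n$ closed sectors $\mathcal F_i^\pm$, the map $\mathcal T_n$ is defined on each such sector by the explicit formulas \eqref{fiplusX}--\eqref{fiplusZ}, \eqref{fe1}--\eqref{fe3}, and their $\mathbb S^-$ analogues. On the interior of each such sector the defining formulas are composed of continuous functions (polynomials, square roots of quantities which are nonnegative there, and $\arctan$ of ratios whose denominators do not vanish on the relevant open sector), so $\mathcal T_n$ is continuous there. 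By the pasting lemma, it then suffices to check that any two formulas agree on the intersection of their (closed) domains; these intersections are the shared edges between adjacent sectors, the two circles $z = \pm\varepsilon r$ where the caps meet the belt, and the two poles $z = \pm r$.

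The key steps, in order, would be: (1) fix $i$ and check continuity across the half-plane $y = x\tan\alpha_i$, $z\ge\varepsilon r$, where $\mathcal F_{i-1}^+$ meets $\mathcal F_i^+$: here one substitutes the boundary relation into both the formula rotated from sector $i-1$ and the formula rotated from sector $i$ and checks they give the same $(X,Y,Z)$ — this amounts to the statement that the image faces $F_{i-1}^+$ and $F_i^+$ share the edge through $\mathcal R_i\cdot(R_n,0,\varepsilon r)^T$ and that the area-matching relation \eqref{beta} sends $\alpha = \alpha_i$ to the correct $\beta$, i.e. $\tan\beta = \tan\frac{\pi}{n}$ after the appropriate shift; the $\arctan$ in \eqref{fiplusX}--\eqref{fiplusY} hits its boundary value $\pm$ of the sector and the formulas close up. (2) Check the same for the lower pyramid sectors $\mathcal F_i^-$ using the remark formulas \eqref{fX-}--\eqref{fZ-}, and across the edges of the prism faces using \eqref{fe1}--\eqref{fe3}. (3) Check the circle $z = \varepsilon r$: from \eqref{fiplusZ} one gets $Z = \varepsilon r$ (since the radical equals $\sqrt{1}$ is wrong — rather $\sqrt{(1-\varepsilon)/(1-\varepsilon)}=1$, giving $Z=\varepsilon r$), and from \eqref{fe3} one gets $Z = z = \varepsilon r$, so the $Z$-coordinates match; then one verifies that the $(X,Y)$ given by \eqref{fiplusX}--\eqref{fiplusY} at $z = \varepsilon r$ coincide with those given by \eqref{fe1}--\eqref{fe2} at $z = \varepsilon r$, using $\sqrt{r(r-\varepsilon r)/(1-\varepsilon)} = r$ and $R_n = \pi r/(n\sin\frac{\pi}{n})$ — both expressions should reduce to the point $M'$ on the common edge $A'B'$ of the prism's top face and the pyramid's base. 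Symmetrically for $z = -\varepsilon r$. (4) Check the poles: as $z\to r$ in \eqref{fiplusX}--\eqref{fiplusY} the factor $\sqrt{r(r-z)/(1-\varepsilon)}\to 0$ while the bracketed factor stays bounded (because $\arctan$ is bounded), so $(X,Y)\to(0,0)$ and $Z\to\varepsilon r + b_n$, the apex of $\mathbb P_n^+$, independently of the sector index $i$ and of the longitude; hence $\mathcal T_n$ is well-defined and continuous at the north pole, and symmetrically at the south pole.

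The main obstacle is step (1) together with the top-circle matching in step (3): one must confirm that the two systems of formulas — one obtained by rotating sector $0$ by $\mathcal R_i$ and one by rotating sector $0$ by $\mathcal R_{i+1}$ — produce literally the same point on the shared half-plane, and this relies on the precise form of \eqref{beta} and of the face equation $u b_n + s b_n\tan\frac{\pi}{n} + t R_n = (b_n+\varepsilon r)R_n$. The cleanest way to organize this is to note that on the closure of $\mathcal F_i^+$ the map agrees with $\mathcal R_i\circ\mathcal T_n^+\circ\mathcal R_i^T$, where $\mathcal T_n^+$ here denotes the $\mathcal F_0^+$-formula \emph{extended} to the full closed sector $\overline{I_0^+}\cap\mathbb S^2$; then continuity on all of $\mathbb S^+$ reduces to checking agreement on the single half-plane $y=0$ (and its rotate $y=x\tan\frac{2\pi}{n}$), where the extended $\mathcal F_0^+$ formula and the extended $\mathcal F_{n-1}^+$ formula meet — a single computation, the rest following by rotational symmetry. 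The belt and the cap–belt interfaces are then handled by the analogous single computations, and the poles by the limiting argument above; assembling these via the pasting lemma gives the claimed continuity of $\mathcal T_n$ on $\mathbb S^2$.
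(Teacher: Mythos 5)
Your proposal is correct, and every interface check you list does in fact close: across the sector edge at longitude $2\pi/n$ the identity $\frac{\pi}{n\sin\frac{\pi}{n}}-\frac{2\pi}{n}\sin\frac{\pi}{n}=\frac{\pi}{n\sin\frac{\pi}{n}}\cos\frac{2\pi}{n}$ (i.e. $1-\cos\frac{2\pi}{n}=2\sin^2\frac{\pi}{n}$) makes the $i=0$ formulas \eqref{fX}--\eqref{fY} agree with the $i=1$ formulas \eqref{fiplusX}--\eqref{fiplusY}, the analogous check for adjacent prism faces reduces exactly to \eqref{Rn}, and your cap--belt and pole arguments are sound (for $n=3,4$ one should read $\arctan\frac{y}{x}$ as the polar angle of $(x,y)$ in $[0,2\pi/n]$, a convention inherited from the paper's own formulas). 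Where you differ from the paper is in direction and coverage: the paper proves continuity of the \emph{inverse} $\mathcal T_n^{-1}$, and only at the prism--pyramid interface inside $I_0^+$ --- there a boundary point $(X,Y,\varepsilon r)$ satisfies the face equation \eqref{ecfpr}, which makes the pyramid-side inverse \eqref{fx}--\eqref{fz} collapse to the cylindrical (Lambert-type) formula \eqref{f1} --- after which it invokes symmetry for the remaining interfaces and passes from continuity of $\mathcal T_n^{-1}$ to that of $\mathcal T_n$, a step that implicitly uses that a continuous bijection from the compact $\mathbb K_n$ onto the Hausdorff $\mathbb S^2$ is a homeomorphism. Your forward-map pasting argument avoids that inversion step and is more complete: it explicitly verifies that the rotation-based definition \eqref{frot} is well defined on the shared sector edges (a check the paper leaves to symmetry) and handles the poles, where the vanishing factor $\sqrt{r(r-z)}$ tames the discontinuity of $\arctan\frac{y}{x}$. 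What the paper's route buys in exchange is a shorter computation, since on the polyhedron the only interface treated is the flat polygon $Z=\varepsilon r$ and the prism-side inverse is the very clean formula \eqref{mape}; what your route buys is a self-contained proof of continuity of $\mathcal T_n$ itself, with all overlaps accounted for.
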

 \begin{proof}
It is enough to restrict ourselves to the first octant $I_{0}^{+}$ and to prove the continuity of $\mathcal T_{n}^{-1}$ on $\mathbb B_{n} \cap \mathbb P_{n}^{+}\cap I_{0}^{+}$.

For $Z=r\varepsilon$, the point projected onto the border of $\mathbb E$ is
 \begin{equation}\label{f1}
 \left (r\sqrt {1-\varepsilon^2}\, \cos \frac {Y}{r\cos \frac \pi n},r\sqrt {1-\varepsilon^2}\, \sin \frac {Y}{r\cos \frac \pi n} ,\varepsilon r\right ).
 \end{equation}
On the other hand, let $(X,Y,\varepsilon r)\in \mathbb P_n^+ $. From \eqref{fx}, \eqref{fy} and \eqref{fz}, its image on the sphere, $\mathcal T_n(X,Y,\varepsilon r)$, will be
\begin{eqnarray*}
 &&x=r\sqrt {1-\varepsilon^2}\,\cos \frac {Y \pi}{n \sin \frac \pi n\cdot \left(X \cos \frac \pi n+Y \sin \frac \pi n \right)},\\
&&y=r\sqrt {1-\varepsilon^2}\, \sin \frac {Y \pi}{n \sin \frac \pi n\cdot \left(X \cos \frac \pi n+Y \sin \frac \pi n \right)},\\
&&z=\varepsilon r.
\end{eqnarray*}
But $(X,Y,\varepsilon r)$ is also a point of $\mathbb B_n$, so it satisfies \eqref{ecfpr}, and finally, replacing $R_n$ from \eqref{Rn}, the above formulas transform into
\begin{eqnarray*}
 &&x=r\sqrt {1-\varepsilon^2}\cos \frac {Y }{r\cos \frac \pi n},\\
&&y=r\sqrt {1-\varepsilon^2}\sin \frac {Y }{r\cos \frac \pi n},\\
&&z=\varepsilon r,
\end{eqnarray*}
i.e. the same as in formula \eqref{f1}.

Similarly one can prove the same equality for a point in $\mathbb P_n^+\cap \mathbb B_n$ and then in $\mathbb P_n^- \cap \mathbb B_n$. In conclusion, the map $\mathcal T_n^{-1}$ is continuous, therefore $\mathcal T_n$ is also continuous.
 \end{proof}

\subsection{Uniform grids on the sphere}\label{sec:heal}
The most important application of our area preserving projection maps is the construction of spherical grids by mapping grids from the polyhedrons $\mathbb K_n(r,\varepsilon)$ to the sphere $\mathbb S^{2}(r)$. The major advantage is the possibility of construction of uniform and refinable spherical grids, since uniform and refinable grids are easier obtainable on polyhedrons.

In the particular case $n=4$ and $\varepsilon=0$, the polyhedron $\mathbb K_{4}(r,0)$ is an octahedron, but not a regular one, as in \cite{HR}. However, one can easily see that, if we make uniform triangular refinements by dividing each planar triangle into four small triangles by lines parallel to the edges, one obtains exactly the grids in \cite{HR}.

Next we will show that the HEALPix grids (see \cite{heal}) can also be obtained by mapping onto the sphere grids on $\mathbb K_{n}(\varepsilon,r)$. However, one advantage of our map is that it allows us to transport onto the sphere some already constructed continuous wavelet bases on polyhedrons, by a similar technique as the one described in \cite{RMC}. Compared with the construction in \cite{RMC}, one can use now the usual scalar product in $L^{2}(\mathbb S^{2})$ instead of a weighted scalar product, since the map is area preserving. Moreover, the transportation of continuous functions on  $\mathbb K_{n}(\varepsilon,r)$ yields continuous non-distorted functions on the sphere, and this does not usually happen when functions defined on rectangles are mapped onto the sphere with the area preserving maps used in \cite{heal}. When transporting a function $f:[0,a]\times[-b,b]\to \mathbb R$ onto the sphere using the maps used in \cite{heal}, the transported function defined on $\mathbb S^{2}$ is usually distorted, and it remains continuous on $\mathbb S^{2}$ only when it satisfies some periodicity conditions (see \cite{RMC}).

\subsubsection{Obtaining the value of $\varepsilon$}\label{sec:healp}

In order to obtain the HEALPix grids on the sphere, we consider $r=1$ for simplicity, and we construct the grids on the polyhedron $\mathbb K_{n}(1,\varepsilon)$ as follows.

Let $p\in \mathbb N$.
For $p$ even, on each face of the prism $\mathbb B_{n}$ we first draw horizontal lines to divide each face into $p/2$ rectangles having the same area. Then, for each of the rectangle we draw the diagonals. Together with the triangular faces of the pyramids of $\mathbb K_{n}$, they will form a grid consisting in $n\cdot p$ cells of rhombic shape (see Figure \ref{fig:gplanppar}), and we will impose that all this cells  have the same area $A_{\mathrm{cell}}$, and this area will be the same after projecting the grids onto the sphere. Therefore we have
\begin{equation}\label{deteps}
\mathcal A(\mathbb E_{\varepsilon})=4\pi \varepsilon=n p \mathcal A_{\mathrm{cell}},\qquad  \mathcal A(\mathbb S_{\varepsilon}^{+})=2\pi(1-\varepsilon)=\frac n 2 \mathcal A_{\mathrm{cell}}.
\end{equation}
Elimining $A_{\mathrm{cell}}$ from these equalities we obtain $\varepsilon=\frac{p}{p+1}$.
\begin{figure}
\centering
\includegraphics[width=0.5\columnwidth]{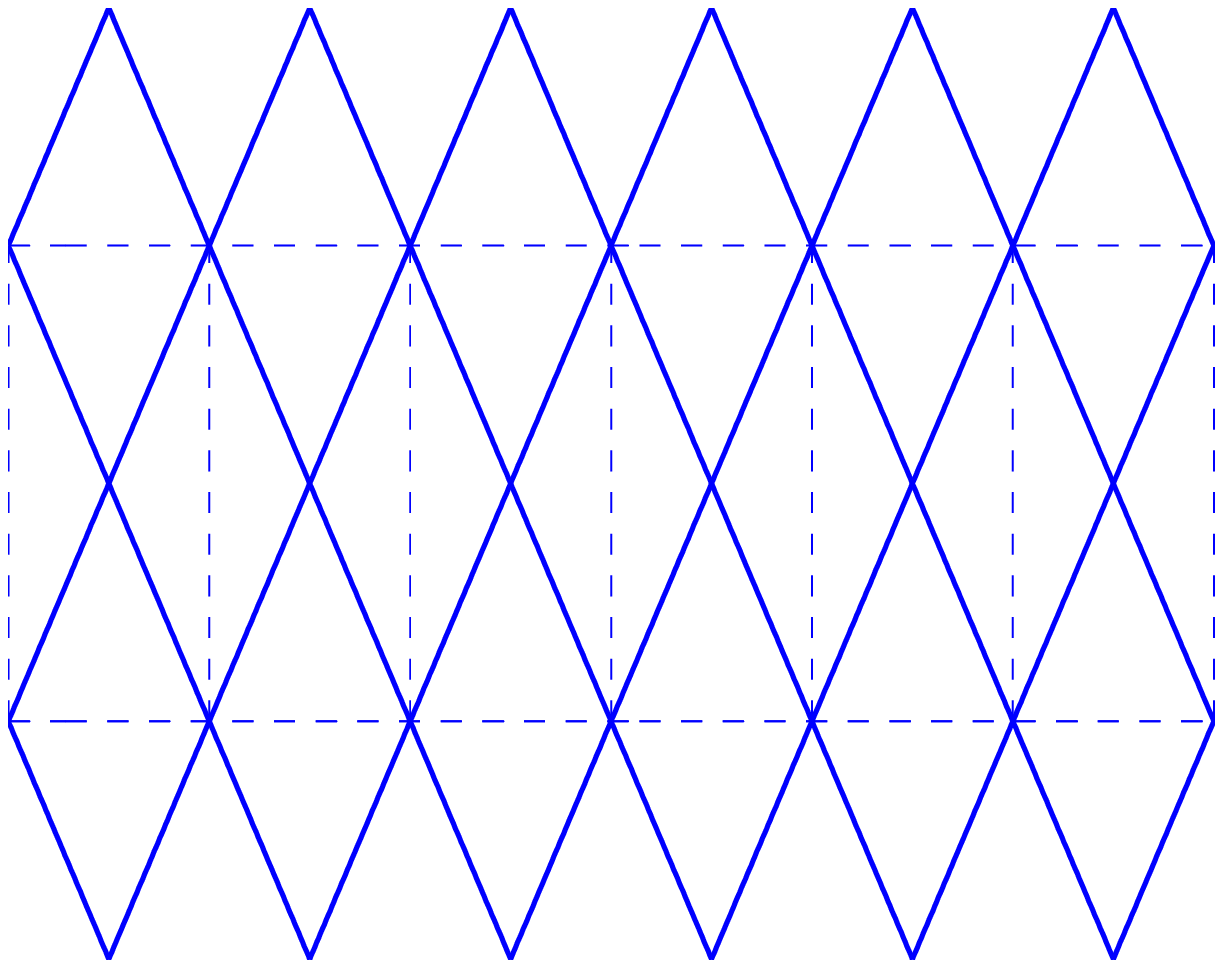}%
\includegraphics[width=0.5\columnwidth]{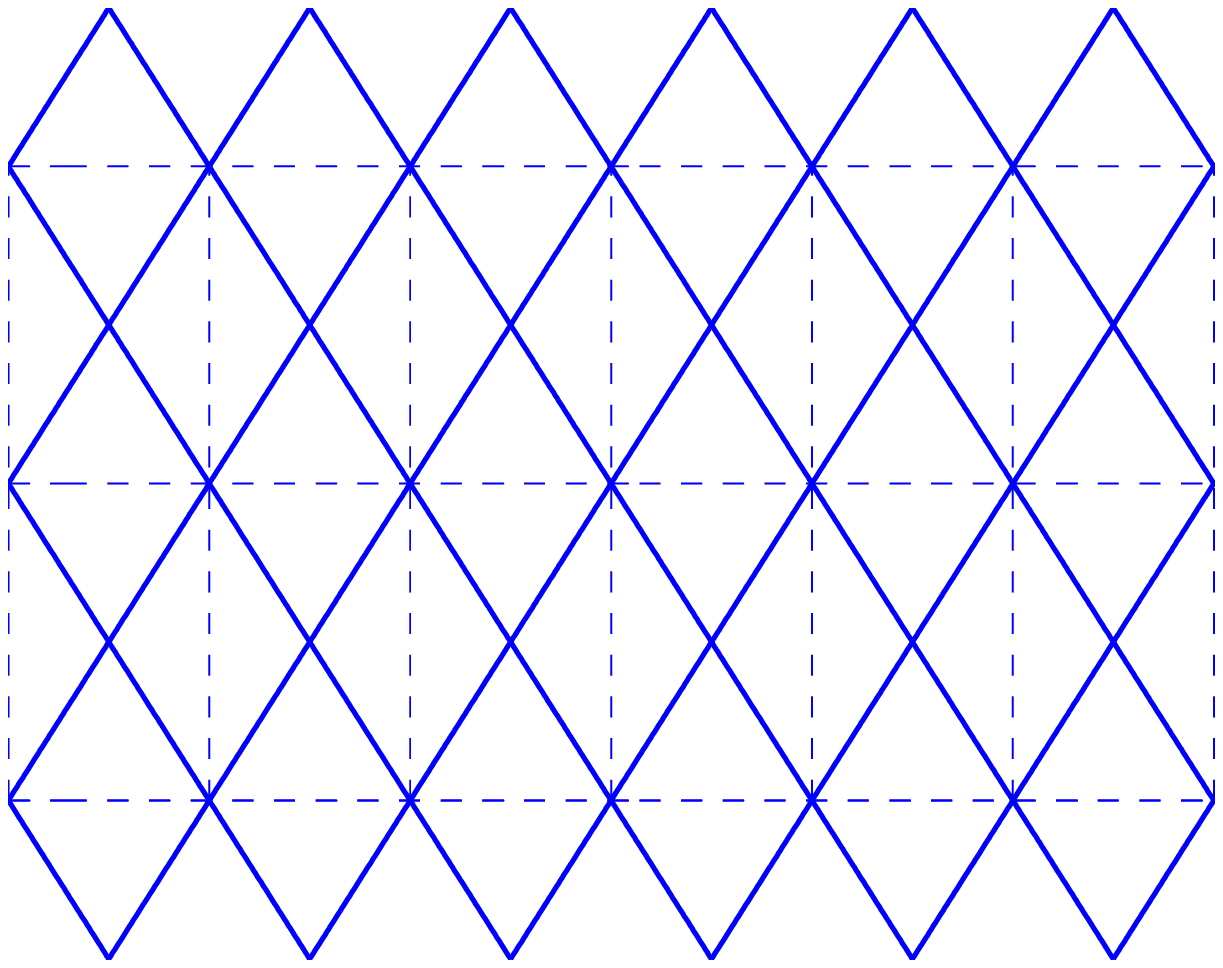}%
\caption{The grids on the polyhedron $\mathbb K_{6}(r,p/(p+1))$,  for $p=2$ (left) and $p=4$ (right). The dashed lines represent the edges of the prism.}\label{fig:gplanppar}
\end{figure}
For $p$ odd, we first rotate the prism $\mathbb P_{n}^{-}$ with $\pi/n$, around the axis $Oz$ and on each face of the prism we first draw $(p-1)/2$ horizontal lines, such that the resulting rectangles have the same area, except the ``lowest'' ones, which have only half of the area. Then we draw the rhombic cells as for the case $p$ even, except the ``lowest'' part of the prism (see Figure \ref{fig:gplanpimpar}).
\begin{figure}
\centering
\includegraphics[width=0.5\columnwidth]{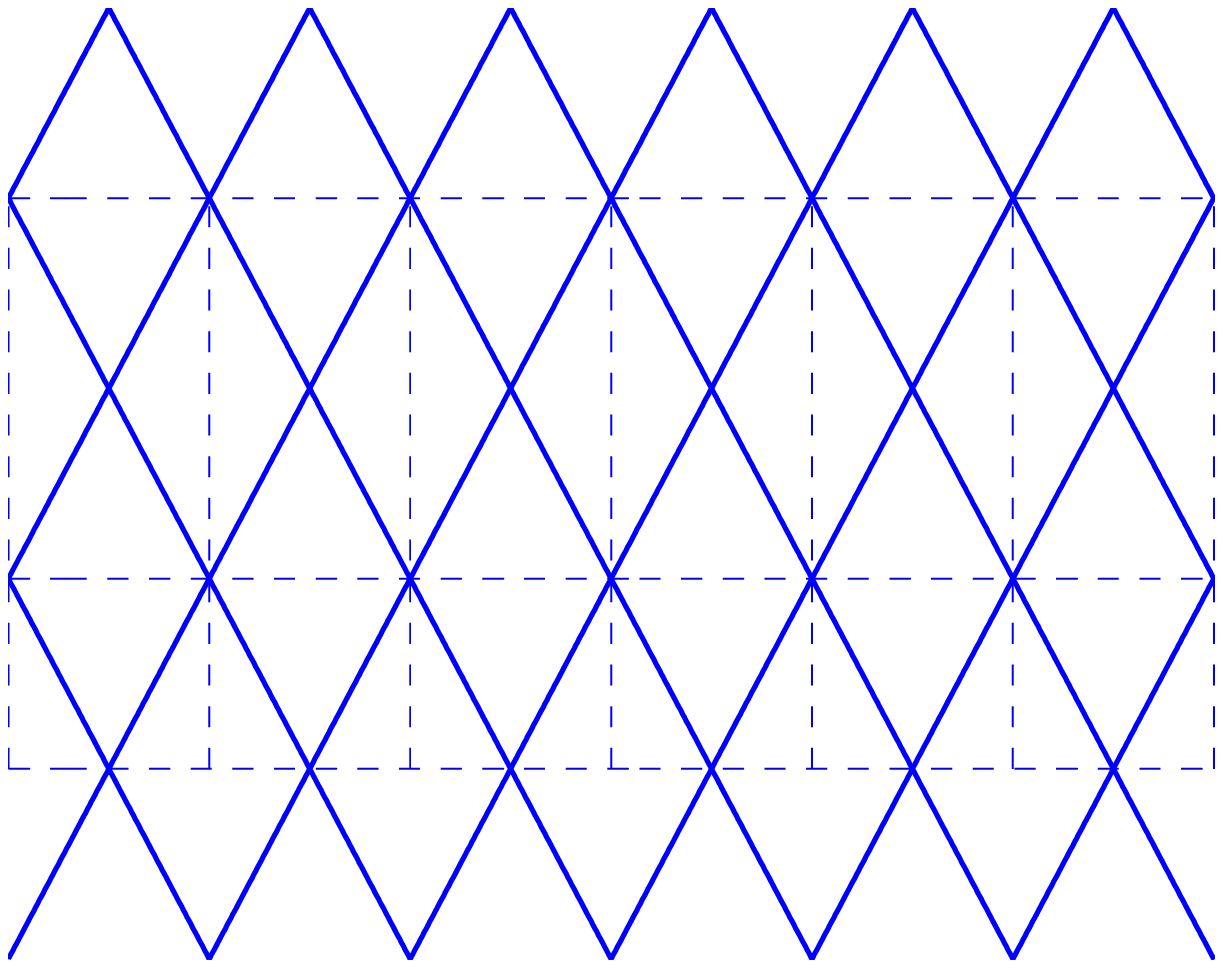}%
\includegraphics[width=0.5\columnwidth]{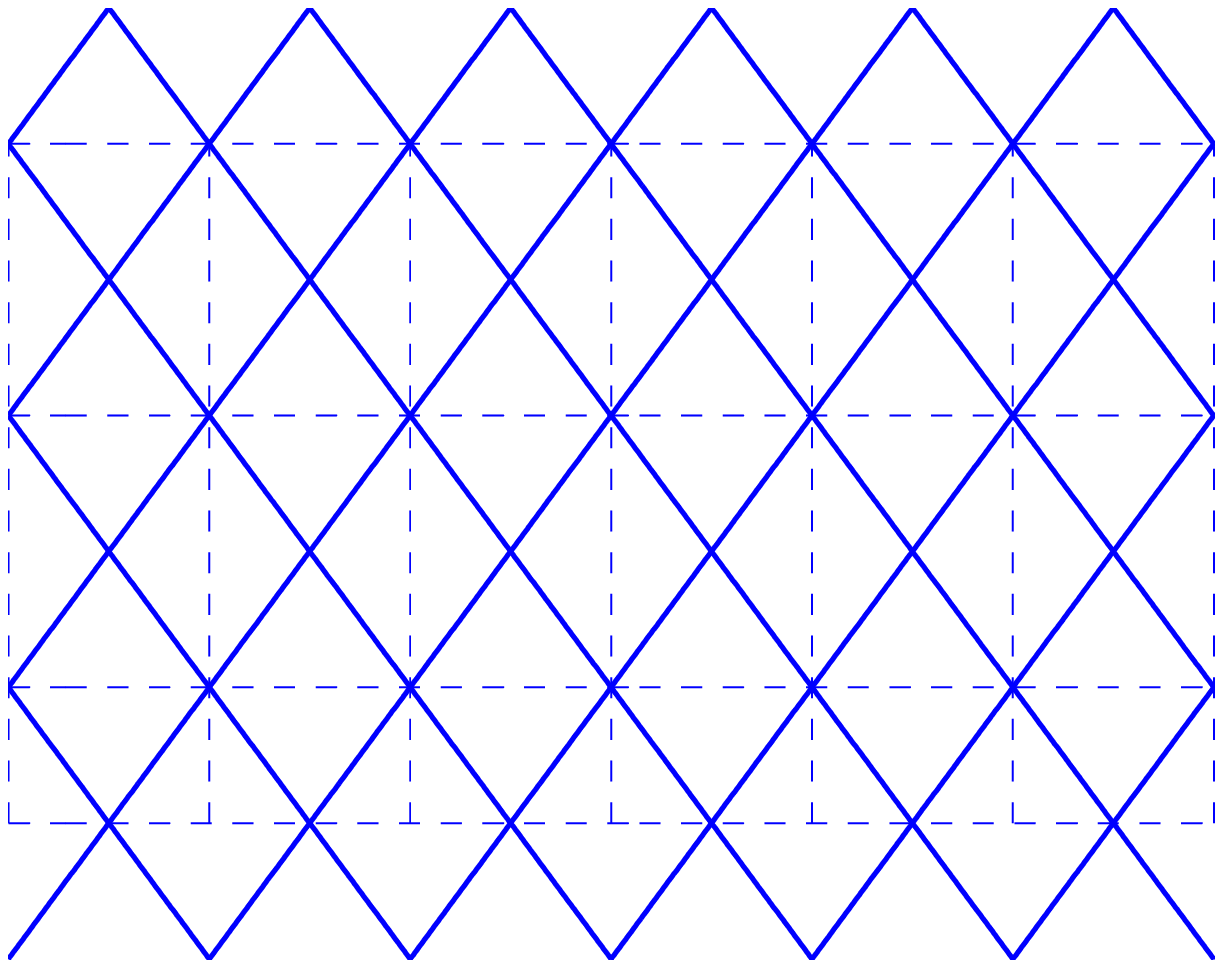}%
\caption{The grids on the polyhedron $\mathbb K_{6}(r,p/(p+1))$,  for $p=3$ (left) and $p=5$ (right). The dashed lines represent the edges of the prism.}\label{fig:gplanpimpar}
\end{figure}
The equalities in \eqref{deteps} also hold, so in this case we must have again $\varepsilon=p/(p+1)$.

Further, for $k\in \mathbb N$, we perform a subdivision of each rhombic cell of the grid on $\mathbb K_{n}(1,p/(p+1))$ into $k^{2}$ cells with the same area, by drawing $k-1$ equidistant parallel lines to each of the edges (see Figure 3).
\begin{figure}
\centering
\includegraphics[width=0.5\columnwidth]{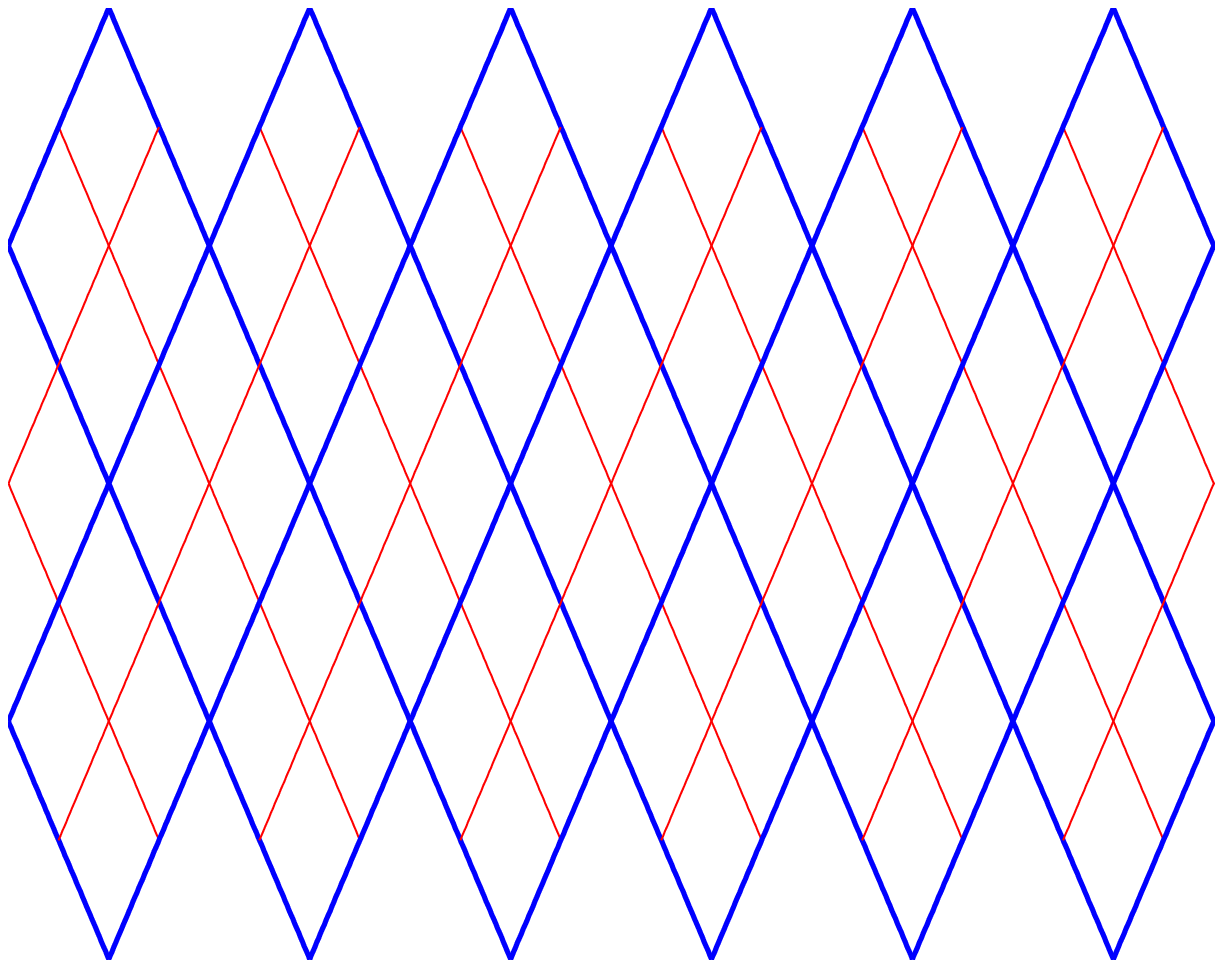}%
\includegraphics[width=0.5\columnwidth]{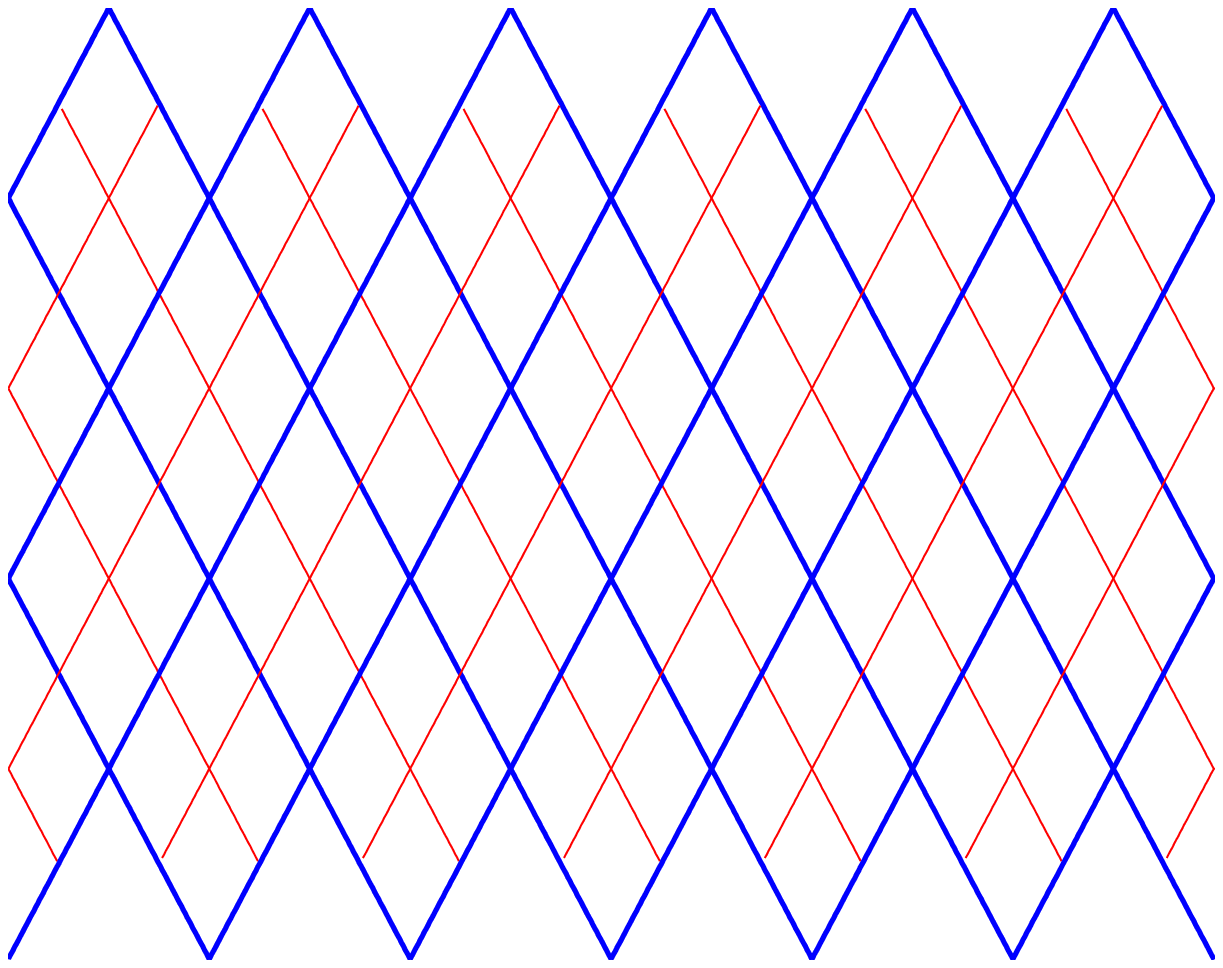}%
\caption{The refined grids on the polyhedron $\mathbb K_{6}(r,p/(p+1))$,  obtained after a subdivision with $k=2$, for $p=2$ (left) and $p=3$ (right).}\label{fig:gplanpimpar}
\end{figure}

Next we write the equations of the lines of our grids and we show that they coincide with the ones deduced in \cite{heal} for the HEALPix grids.

\subsubsection{The grid on the pyramid}
We parametrize the triangle with vertices
$$(0,0),\ (R_{n}\cos\alpha_i, R_{n}\sin\alpha_i),\ (R_{n}\cos\alpha_{i+1}, R_{n}\sin\alpha_{i+1})
$$ as
\begin{eqnarray*}
 X&=&u\cdot R_{n}\cos\alpha_i+v\cdot R_{n}\cos\alpha_{i+1},\\
Y&=&u\cdot R_{n}\sin\alpha_i+v\cdot R_{n}\sin\alpha_{i+1},
\end{eqnarray*}
with $u\in [0,1], \ v\in [0,1-u]$. For the parametrization of the face of the pyramid contained in $I_{i}^{+}$, we need to express $Z$ from the equation of the plane containing the points
$
(0,0, b_{n}+\varepsilon),\ (R_{n}\cos\alpha_i, R_{n}\sin\alpha_i,\varepsilon),\ (R_{n}\cos\alpha_{i+1}, R_{n}\sin\alpha_{i+1},\varepsilon).
$
Thus, we obtain
\begin{eqnarray*}
 Z&=&\varepsilon+b_{n}(1-u-v).
\end{eqnarray*}
The spherical images have the equations
\begin{eqnarray*}
x&=&\sqrt{1-z^2}\; \cos\left(\frac{2\pi}{n}\cdot\frac{v}{u+v}+\alpha_i \right),\\
y&=&\sqrt{1-z^2}\; \sin\left(\frac{2\pi}{n}\cdot\frac{v}{u+v}+\alpha_i \right),\\
 z&=&1-(1-\varepsilon)(u+v)^2.
\end{eqnarray*}
If we map the curves $u=\mathcal C_{1}, \ v\in [0,1-u]$ and $v=\mathcal C_{2},\  u\in [0,1-v]$ (with $\mathcal C_{1,2}$ constants chosen properly), we obtain the spherical curves in the HEALPix grid.
Indeed, if we take for example $n=4$ and $p=2$ (i.e. $\varepsilon={2}/{3}$), the above equations give
\begin{eqnarray}
z&=&1-\frac{1}{3}(u+v)^2,\label{hp1}\\
\theta&=& \frac{\pi}{2}\cdot \frac{v}{u+v}+\frac{\pi i}{2},\label{hp2}
\end{eqnarray}
where $\theta=\arctan (y/x)$. Let us consider $\theta_{t}=\theta \mod \frac{\pi}{2}=\frac{\pi}{2}\cdot \frac{v}{u+v}$. From \eqref{hp1} and \eqref{hp2} we obtain
\[z=1- \frac{1}{3}u^2\cdot \left(\frac{\pi}{2\theta_{t}-\pi}\right)^2, \ \mbox {and also }
z=1- \frac{1}{3}v^2\cdot \left(\frac{\pi}{2\theta_{t}}\right)^2.\]
Now, if we consider $u=\frac \ell  {k}=:u_{\ell}$, for each $ \ell\in \{0,1,\ldots,k\}$ we obtain the family of curves
\begin{eqnarray*}
z&=&1- \frac{1}{3}u_{\ell}^2\cdot \left(\frac{\pi}{2\theta_{t}-\pi},\right)^2,\end{eqnarray*}
 and this is exactly the equation (20) from \cite{heal}. Note that
 $N_{\mathrm{side}}$ and $k$ in \cite{heal} are in our notations $k$ and $\ell$, respectively.

The other class of curves of our grid are
\begin{eqnarray*}
v&=&\frac \ell k=:v_{\ell}\\
z&=&1- \frac{1}{3}v_{\ell}^2\cdot \left(\frac{\pi}{2\theta_{t}}\right)^2,
\end{eqnarray*}
which is exacly the equation (19) from \cite{heal}, with $v_{\ell}=k/N_{\mathrm{side}}$.
\subsubsection{The grid on the prism}
The point $U \in \mathbb P_n^{+} \cap \mathbb B_{n} \cap   I_{i}^{+}  $  of coordinates
\begin{flalign*}
x_{U}&=\left(1-\frac{\ell}{k}\right)R_{n}\cos\frac{2\pi i}{n}+\frac{\ell}{k} R_{n}\cos\frac{2(i+1)\pi}{n},\\
y_{U}&=\left(1-\frac{\ell}{k}\right)R_{n}\sin\frac{2\pi i}{n}+\frac{\ell}{k} R_{n}\sin\frac{2(i+1)\pi}{n},\\
z_{U}&=\varepsilon,
\end{flalign*}
and $Q\in \mathbb P_n^{-} \cap \mathbb B_{n} \cap   I_{i}^{-}$ of coordinates
\begin{flalign*}
x_{Q}&=\left(1-\frac{\ell}{k}\mp\frac{p}{2}\right)R_{n}\cos\frac{2\pi i}{n}+\left(\frac{\ell}{k}\pm\frac{p}{2}\right) R_{n}\cos\frac{2(i+1)\pi}{n},\\
y_{Q}&=\left(1-\frac{\ell}{k}\mp\frac{p}{2}\right)R_{n}\sin\frac{2\pi i}{n}+\left(\frac{\ell}{k}\pm\frac{p}{2}\right) R_{n}\sin\frac{2(i+1)\pi}{n},\\
z_{Q}&=-\varepsilon.
\end{flalign*}
belong to the same line of the grid of the prism.
A point $(X,Y,Z)$ of the grid of the prism satisfies the equation of the line passing through $U$ and $Q$
\[\frac{X-x_{U}}{x_{Q}-x_{U}}=\frac{Y-y_{U}}{y_{Q}-y_{U}}=\frac{Z-z_{U}}{z_{Q}-z_{U}}=t.\]
When we calculate its image on the sphere using formulas (\ref{xeq}), (\ref{yeq}) and (\ref{zeq}), we obtain
\begin{flalign*}
z&=\varepsilon-2\varepsilon t,\\
x&=\sqrt{1-z^2}\; \cos\left(\frac{2\pi}{n}\cdot \left(i+\frac{\ell}{k}\pm \frac{pt}{2}\right)\right),\\
y&=\sqrt{1-z^2}\; \sin\left(\frac{2\pi}{n}\cdot \left(i+\frac{\ell}{k}\pm \frac{pt}{2}\right)\right).
\end{flalign*}
In the particular case $p=2, \ n=4,\ \varepsilon=2/3$ we obtain
\begin{flalign*}
z&=\frac{2}{3}-\frac{4t}{3},\\
\theta&=\frac{\pi}{2}\left(i+\frac{\ell}{k}\pm t\right).
\end{flalign*}
If we take again $\theta_{t}=\theta \mod \frac{\pi}{2}=\frac{\pi}{2}\cdot \left(\frac{\ell}{k}\pm t\right)$, we further obtain
\[z=\frac{2}{3}\pm \frac{4}{3}\left(\frac{2\theta_{t}}{\pi}-\frac{\ell}{k} \right), \]
which agree with formulas (22), (A2), (A3) in \cite{heal}.

\section{Volume preserving map from the interior of a polyhedron $\mathbb K_n(r',\varepsilon)$ to the ball of radius $r$.}\label{sec:volpres}

We will use the area preserving map constructed in the previous section for the construction of a volume preserving map from the interior of a polyhedron $\mathbb K_n$ onto the ball of radius $r$.

We fix $\varepsilon$, and for $\rho>0$ we define
$$\overline{\mathbb K}_n(\rho, \varepsilon)=\mbox{int}(\mathbb K_n(\rho,\varepsilon)) \cup \mathbb K_n(\rho,\varepsilon) \quad \mbox {and}$$
$$\overline{\mathbb S^2}(\rho)=\{(x,y,z)\in \mathbb R^3,\ x^2+y^2+z^2\leq \rho^2  \}.$$
Their volumes are
\begin{equation}\nonumber
 vol(\overline{\mathbb K}_n(\rho,\varepsilon))=\rho^3 \gamma, \qquad
vol(\overline{\mathbb S^2}(\rho))=\rho^3\beta,
\end{equation}
with
$$ \gamma=2 \big (\varepsilon + \frac {c (\varepsilon)}3 \big )\frac{\pi^2}n\cot\frac{\pi}n, \quad
c{(\varepsilon)}=\sqrt{4(1-\varepsilon)^2 -\frac {\pi^2}{n^2}\cot^2 \frac \pi n}, \quad\beta=\frac{4\pi}3.
$$
We also need to define the domains
\begin{eqnarray*}
\overline{\mathbb S^{+}}(\rho)&=&\overline{\mathbb S^{2}}(\rho)\cap \{ (x,y,z), \ z\geq \varepsilon \rho\},\\
\overline{\mathbb E}(\rho)&=&\overline{\mathbb S^{2}}(\rho)\cap \{ (x,y,z), \ | z|\leq \varepsilon \rho\}.
\end{eqnarray*}
\subsection{Construction of the volume preserving map  $\mathcal V_{n}:\overline{\mathbb S^2}(r) \to \overline{\mathbb K}_n(r\sqrt[3]{\beta/\gamma},{\varepsilon})$ }
Next we fix $r>0, \, n\in \mathbb N$, $n\geq 3$, and we will investigate the possibility of construction of a map $\mathcal V_{n}:\overline{\mathbb S^2}(r) \to \overline{\mathbb K}_n(r\sqrt[3]{\beta/\gamma},{\varepsilon})$ which is volume preserving, i.e. $$vol(D)=vol(\mathcal V_{n}(D)), \quad \mbox{for all }D\subseteq \overline{\mathbb S^2}(r).$$
We propose the following construction, which has two steps.

\textit{Step 1}. We denote $\xi=\sqrt[3]{\beta/\gamma}$ and define the map $\mathcal L:\overline{\mathbb S^2}(r) \to \overline{\mathbb S^2}(\xi r)$,
$$
\mathcal L (x,y,z)=\xi\cdot(x,y,z), \mbox{ for all }(x,y,z)\in \overline{\mathbb S^2}(r).
$$

\textit{Step 2}. Let $(\widetilde{x},\widetilde{y},\widetilde{z})\in \overline{\mathbb S^2}(r\xi)$. Then $(\widetilde{x},\widetilde{y},\widetilde{z})\in \mathbb S^2(\widetilde{\rho})$, with $\widetilde{\rho}=\sqrt{\widetilde{x}^2+\widetilde{y}^2+\widetilde{z}^2}=\xi \rho$. Now we consider the area preserving map $\mathcal T_{n,\widetilde \rho}:\mathbb S^2(\widetilde{\rho}) \to \mathbb K_n(\widetilde{\rho},\varepsilon)$ defined in Section \ref{sec:conTn} (in formulas \eqref{fiplusX}-\eqref{fiplusZ}, \eqref{fX-}-\eqref{fZ-} and \eqref{fe1}-\eqref{fe3} we have to take $\widetilde{\rho}$ instead of $r$).

Finally, we define the map $\mathcal V_{n}:\overline{\mathbb S^2}(r) \to \overline{\mathbb K}_n(r \sqrt[3]{\beta/\gamma},\varepsilon)$ by $$\mathcal V_{n}( x, y, z)=(\mathcal T_{n,\widetilde \rho} \circ \mathcal L)( x, y, z)=(\mathcal T_{n,\xi( x^{2}+y^{2}+  z^{2})^{1/2}} \circ \mathcal L) (x,y,  z).$$

Next we investigate wether the Jacobian of $\mathcal V_{n}$ ca be $\pm1$.

\textit{Case 1}. For $(x,y,z)\in \overline{\mathbb S^+}(r) \cap I_0^+$ the formulas for $(X,Y,Z)=\mathcal V_{n}(x,y,z)$ are
\begin{eqnarray}
X&=&\xi \sqrt {\frac{\rho(\rho-z)}{1-\varepsilon }}\left(\frac{\pi}{n\sin \frac \pi n}- \sin\frac \pi n \arctan \frac yx \right),\label{volX}\\
Y&=&\xi \sqrt {\frac{\rho(\rho-z)}{1-\varepsilon }}\,\cos \frac {\pi}{n}\arctan \frac yx,\label{volY}\\
Z&=&\xi\rho \left( \varepsilon +c (\varepsilon) \left( 1-\sqrt \frac{1-z/\rho}{1-\varepsilon } \right)\right),\label{volZ}
\end{eqnarray}
where $\rho=\sqrt{x^2+y^2+z^2}.$ If we evaluate the Jacobian of $\mathcal V_{n}$ restricted to $I_0^+$ we find
$$
J(\mathcal V_{n})=\left |\begin{array}{ccc}
                       \frac{\partial X}{\partial x} & \frac{\partial X}{\partial y} & \frac{\partial X}{\partial z} \\
                       \frac{\partial Y}{\partial x} & \frac{\partial Y}{\partial y} & \frac{\partial Y}{\partial z} \\
                       \frac{\partial Z}{\partial x} & \frac{\partial Z}{\partial y} & \frac{\partial Z}{\partial z}
                     \end{array}
 \right | =\frac{c(\varepsilon)+\varepsilon}{(1-\varepsilon)(c(\varepsilon)+3\varepsilon)}>0.
$$
If we impose the volume preserving condition $J(\mathcal V_{n})=1$ we obtain the solution $\varepsilon=0$ for all $n\geq 3$. Another  case when $J(\mathcal V_{n})=1$ is when $c(\varepsilon)=2-3\varepsilon$, which reduces to the equation
\begin{equation}
2-3\varepsilon =\sqrt{ 4(1-\varepsilon)^{2}-\frac {\pi^2}{n^2} \cot^2 \frac {\pi}{n}.}
\end{equation}
This equation gives solutions only in the case when $n=3,4,5$, and the solutions are
\begin{description}
\item for $n=3$,
$$\varepsilon=\frac 1{45}\left( 18-\sqrt {3(108-5\pi^{2})}\right), \mbox{ i.e. }\varepsilon=0.105226...;$$
\item for $n=4$,
$$
\varepsilon_{1,2}=\frac 1{20}\left( 8\pm \sqrt {64-5\pi^{2})}\right), \mbox{ i.e. }\varepsilon_1=0.20861..., \ \varepsilon_{2}=0.59139...;
$$
\item for $n=5$,
$$
\varepsilon_{1,2}=\frac 1{25}\left( 10\pm \sqrt {100-5\pi^{2}-2\sqrt 5 \pi^{2})}\right), \mbox{ i.e. }\varepsilon_1=0.297912..., \ \varepsilon_{2}=0.502088... .
$$
\end{description}
\textit{Case 2.} For $(x,y,z)\in \overline{\mathbb E}(r) \cap I_0^+$ the formulas for $(X,Y,Z)=\mathcal V_{n}(x,y,z)$ are
\begin{eqnarray*}
X&=&\xi \rho \left(\frac{\pi}{n\sin \frac \pi n}- \sin\frac \pi n \arctan \frac yx \right),\\
Y&=&\xi \rho \cos \frac \pi n  \arctan \frac yx ,\\
Z&=&\xi z.
\end{eqnarray*}
In this case we calculate that
$$J(\mathcal V_{n})=\frac 2 {3 \varepsilon +c(\varepsilon)},$$
and its value is 1 when $c(\varepsilon)=2-3\varepsilon.$
\medskip

Of course, it is immediate that the Jacobian is 1 for the whole ball $\overline{\mathbb S^2}(r).$
\begin{remark}
The method that we have used above, which makes use of the area preserving map, can be applied only in the cases when all the polyhedrons $\mathbb K_{n}(\widetilde \rho,\varepsilon)$, with $\widetilde \rho \in (0,r\xi]$ admit inside a sphere which is tangent to all the faces. Indeed, if we split $\mathbb K_{n}(\widetilde \rho,\varepsilon)$ with $\varepsilon \neq 0$ into small pyramids with apex at the origin $O$ and bases the faces of $\mathbb K_{n}(\widetilde \rho,\varepsilon)$, there exists such a sphere if the distance from $O$ to a face of $\mathbb P_{n}^{+}$ equals to the distance from $O$ to a face of $\mathbb B_{n}$, i.e.
\begin{equation}\label{circum}
\frac{(b_{n}+\varepsilon \widetilde \rho) \pi}{2(1-\varepsilon)n}\cot \frac \pi n=\frac {\pi \widetilde \rho }n \cot \frac \pi n=:d_{n}.
\end{equation}
In this case, the sum of the volumes of the small pyramids is
$$vol(\mathbb K_{n}(\widetilde \rho,\varepsilon))=\frac{d_{n}\mathcal A(\mathbb K_{n}(\widetilde \rho,\varepsilon))}3=\frac{d_{n}\mathcal A (\mathbb S^{2}(\widetilde \rho)) }3=\frac{d_{n} 4\pi \widetilde \rho^{2}}3.$$
This equals the volume of the sphere $\mathbb S^{2}(\widetilde \rho/\xi)$ if $d_{n}=\widetilde \rho /\xi^{3}$, which reduces to $c(\varepsilon)=2-3\varepsilon.$

On the other hand, condition \eqref{circum} further gives $b_{n}=\widetilde \rho (2-3 \varepsilon)$, i.e. again $c(\varepsilon)=2-3 \varepsilon.$ This condition is exactly the condition obtained by imposing $J(\mathcal V_{n})=1.$

We remind also the construction in \cite{RMG}, where a similar method worked for the cube, which is also a polyhedron which can be circumscribed to a sphere.
\end{remark}
\subsection{The inverse $\mathcal V_{n}^{-1}:  \overline{\mathbb K}_n(r \sqrt[3]{\beta/\gamma},\varepsilon) \to \overline{\mathbb S^2}(r)$}

\subsubsection{The case $\varepsilon=0$}
Let $(X,Y,Z)\in \overline{\mathbb K}_n(r \sqrt[3]{\beta/\gamma},0)  $.  First we have to find the domain $I_{i}^{s}$ which contains $(X,Y,Z)$.  Suppose, for simplicity, that
$(X,Y,Z)\in I_0^+$.
We have to find $\overline \rho\in (0,r \xi]$ such that $(X,Y,Z)\in \mathbb K_n(\overline \rho,0)$. The equation of the plane containing the face of $\mathbb P_{n}^{+}\cap I_{0}^{+}$ is
\begin{equation}\label{plan}
X+Y \tan\frac \pi n+Z \frac \pi {n \sin \frac \pi n \sqrt{4-\frac {\pi^2}{n^2}\cot^{2}\frac \pi n}}=\frac {\pi \overline \rho}{n\sin \frac \pi n}.
\end{equation}
Therefore, when $(X,Y,Z)$ is given, then $\overline \rho$ can be calculated from formula \eqref{plan}.  Further, $\mathcal V_{n}^{-1}(X,Y,Z)$ is defined as
\begin{equation}\label{inversaV}
\mathcal V_{n}^{-1}(X,Y,Z)=(\mathcal L^{-1} \circ \mathcal T^{-1}_{n,\overline \rho})(X,Y,Z),
\end{equation}
which can be immediately be calculated using the formulas \eqref{fz},\eqref{fx},\eqref{fy} for $\alpha_{i}=0, \varepsilon=0$, and $r=\overline \rho.$ The formulas for the general case $(X,Y,Z)\in I_{i}^{\pm}$ cand be deduced similarly using the same ideas as in Section \ref{sec:conTn}.

\subsubsection{The case $\varepsilon \neq 0$}
For simplicity, suppose again that $(X,Y,Z)\in I_{0}^{+}\cup I_{0}^{-}.$ In this case we have to find not only the radius $\overline \rho$ of the polyhedron $\mathbb K_{n}(\overline \rho,\varepsilon)$ which contains the point $(X,Y,Z)$, but we must also find whether $(X,Y,Z)$ belongs to the pyramid $\mathbb P_{n}^{+}(\overline \rho,\varepsilon)$ or to the prism $\mathbb B_{n}(\overline \rho,\varepsilon)$.
First we observe that the pyramids $\mathbb P_{n}^{+}(\rho,\varepsilon)$, $\rho>0$ are situated above  the conical surfaces generated by lines that make a constant angle $\beta=\arccos \varepsilon $ with the axis $OZ$. This means that $(X,Y,Z)\in \mathbb P_{n}^{+}(\overline \rho,\varepsilon)$ if it satisfies the equations
$$
X^{2}+Y^{2}\leq \left( \frac 1{\varepsilon^{2}}-1\right)Z^{2},\qquad Z\geq 0.
$$
and $(X,Y,Z)\in \mathbb P_{n}^{-}(\overline \rho,\varepsilon)$ if it satisfies the first inequality and $Z\leq 0.$ If
$$
X^{2}+Y^{2}\geq \left( \frac 1{\varepsilon^{2}}-1\right)Z^{2},
$$
then $(X,Y,Z)\in \mathbb B_{n}(\overline \rho,\varepsilon). $

Further we have to calculate $\overline \rho.$ In the case when $(X,Y,Z)\in \mathbb P_{n}^{\pm} (\overline \rho,\varepsilon)$, the determination of $\overline \rho$ was already described for the case $\varepsilon =0$, but in this case $\overline \rho$ is calculated from the formula
$$
X+Y \tan\frac \pi n+Z \frac \pi {c(\varepsilon) n \sin \frac \pi n}=\overline \rho \left(
\frac \varepsilon {c(\varepsilon)}+1
\right)\frac {\pi }{n\sin \frac \pi n}.
$$

So it remains to consider the case  $(X,Y,Z)\in \mathbb B_{n}(\overline \rho,\varepsilon)$. The equation of the vertical  plane which contains $(X,Y,Z)$ can be written as
\begin{equation}\label{detroprisma}
X+Y\tan \frac \pi n=\frac{\pi \overline \rho}{n \sin \frac \pi n},
\end{equation}
and this formula allows us to calculate the value of $\overline \rho$.

Finally, after the determination of $\overline \rho$, the inverse can be calculated with formula \eqref{inversaV}.

\subsection{Transporting uniform and refinable grids from the polyhedrons $\overline{\mathbb K}_n(r',\varepsilon)$ onto the ball}
The most important application of the volume preserving map $\mathcal V_n$ is that it allows the construction of uniform and refinable grids on the ball starting from similar grids of the polyhedron. In a first step we construct $2n$ tetrahedrons (triangular prisms), each having the vertex $O$ and base a face of the pyramids $\mathbb P_n^\pm$. For the prism $\mathbb B_n$, we triangularize each face of $\mathbb B_n$ and then construct again $2n$ tetrahedrons with vertex $O$ and base a triangle. In the second step, we split every of the $4n$ tetrahedrons into four smaller tetrahedrons having the same volume, this simple procedure being described in \cite{HR2}. The refinement can be repeated, therefore we end up with a uniform and refinable grid of $\overline{\mathbb K}_n(r',\varepsilon)$, whose image on the ball $\overline {S^2}(r)$ will be a uniform and refinable grid.


\end{document}